\newtheorem{theorem}{Theorem}[section]
\newtheorem{lemma}[theorem]{Lemma}
\newtheorem{corollary}{Corollary}[section]
\theoremstyle{definition}
\theoremstyle{remark}
\numberwithin{equation}{section}
\begin{document}

\title[Polyatomic ellipsoidal BGK model]{Entropy production estimates for the polyatomic ellipsoidal BGK model}

\author{Sa Jun Park}
\address{Department of Mathematics, Sungkyunkwan University, Suwon 440-746, Republic of Korea}
\email{parksajune@skku.edu, }
\author{ SEOK-BAE YUN }
\address{Department of Mathematics, Sungkyunkwan University, Suwon 440-746, Republic of Korea}
\email{sbyun01@skku.edu}



\keywords{Ellipsoidal BGK model, Polyatomic gases, Boltzmann equation, Kinetic theory of gases, Entropy production estimates}

\begin{abstract}
We study the entropy production estimate for the polyatomic ellipsoidal BGK model, which is a
relaxation type kinetic model describing the time evolution of polyatomic particle systems. An interesting dichotomy is observed
between $0<\theta\leq 1$ and $\theta=0$:
In each case, a distinct target Maxwellians should be chosen to estimate the entropy production functional from below by the relative entropy.
The time asymptotic equilibrium state toward which the distribution function stabilizes bifurcates accordingly.
\end{abstract}
\maketitle
\section{introduction}
In this paper, we are interested in the entropy production property of the ellipsoidal BGK model for polyatomic molecules  \cite{ALPP,Brull3,Cai}
\begin{eqnarray}\label{ESBGK}
\partial_tf+v\cdot \nabla_xf&=&A_{\nu,\theta}(\mathcal{M}_{\nu,\theta}(f)-f),\cr
\qquad f(0,x,v,I)&=&f_0(x,v,I).
\end{eqnarray}
The polyatomic velocity distribution function $f(t,x,v,I)$ represents the number density on phase space $(x,v)\in \mathbb{R}^3_x\times \mathbb{R}^3_v$ with internal energy $I^{2/\delta}~(I\geq0)$ at time $t\geq0$.
Here, $\delta$ is the additional degree of freedom other than the translation motion. Such internal energy formulation can be traced back to \cite{Brun,KP,PL}.
The collision frequency $A_{\nu,\theta}$ is given by $A_{\nu,\theta}=(\rho T_{\delta})/\{\mu(1-\nu+\theta\nu)\}$
where $\mu>0$ denotes the viscosity.
 To explain the polyatomic ellipsoidal Gaussian
$\mathcal{M}_{\nu,\theta}(f)$,  we need to introduce several macroscopic quantities. We start with the definition of local density, bulk velocity, stress tensor and specific internal energy:
\begin{eqnarray*}
&&\rho(t,x)=\int_{\mathbb{R}^3 \times \mathbb{R}^+} f(t,x,v,I) dvdI, ~
U(t,x)=\frac{1}{\rho}\int_{\mathbb{R}^3 \times \mathbb{R}^+} vf(t,x,v,I) dvdI, \cr
&&\qquad\Theta(t,x) = \frac{1}{\rho}\int_{\mathbb{R}^3 \times \mathbb{R}^+} (v-U)\otimes(v-U)f(t,x,v,I) dvdI, \cr
&&\qquad E_{\delta}(t,x)=\frac{1}{\rho}\int_{\mathbb{R}^3 \times \mathbb{R}^+} \left(\frac{1}{2}|v-U|^2 +I^{\frac{2}{\delta}}\right)f(t,x,v,I) dvdI.
\end{eqnarray*}
The specific internal energy $E_{\delta}$ is divided into the energy from
the translational motion $E_{tr}$ and the energy due to the internal configuration $E_{int}$:
\begin{eqnarray*}
E_{tr}=\frac{1}{\rho}\int_{\mathbb{R}^3 \times \mathbb{R}^+} \frac{1}{2}|v-U|^2 f(t,x,v,I) dvdI, \quad E_{int} = \frac{1}{\rho}\int_{\mathbb{R}^3 \times \mathbb{R}^+}  I^{\frac{2}{\delta}} f(t,x,v,I) dvdI,
\end{eqnarray*}
which, as a consequence of equipartition theorem, are  associated  with the corresponding temperatures $T_{\delta}$, $T_{tr}$ and $T_{int}$ respectively:
\begin{eqnarray*}
E_{\delta}=\frac{3+\delta}{2}T_{\delta},\quad E_{tr}=\frac{3}{2}T_{tr}, \quad E_{int}= \frac{\delta}{2}T_{int}.
\end{eqnarray*}
Note that $T_{\delta}$ is represented by a convex combination of $T_{tr}$ and $T_{int}$:
\begin{eqnarray*}
T_{\delta}=\frac{3}{3+\delta}T_{tr}+\frac{\delta}{3+\delta}T_{int}.
\end{eqnarray*}
For $0\leq\theta\leq1$, we define the  relaxation  temperature $T_{\theta}$ and the corrected  temperature tensor $\mathcal{T}_{\nu,\theta}$ by
\begin{eqnarray*}
T_{\theta}=\theta T_{\delta}+(1-\theta)T_{int},\quad
\mathcal{T}_{\nu,\theta}=\theta T_{\delta}Id+(1-\theta)\big\{(1-\nu)T_{tr}Id+\nu \Theta\big\}.
\end{eqnarray*}
Now, the polyatomic ellipsoidal Gaussian $\mathcal{M}_{\nu,\theta}$ is given by
\begin{eqnarray*}
\mathcal{M}_{\nu,\theta}(f)= \frac{\rho \Lambda_{\delta}}{\sqrt{\det (2\pi \mathcal{T}_{\nu,\theta}}) T_{\theta}^{\frac{\delta}{2}}}\exp\left(-\frac{1}{2}(v-U)^{\top}\mathcal{T}^{-1}_{\nu,\theta}(v-U)-\frac{I^{\frac{2}{\delta}}}{T_{\theta}}\right).
\end{eqnarray*}
Here, $\Lambda_{\delta}$ denotes $\Lambda_{\delta}= 1/\int_{\mathbb{R}_+} e^{-I^{2/\delta}} dI$.
The relaxation operator satisfies the following cancellation property:
\begin{eqnarray*}
\int_{\mathbb{R}^3 \times \mathbb{R}^+} (\mathcal{M}_{\nu,\theta}(f)-f)
\left\{1,v,\frac{1}{2}|v|^2 +I^{\frac{2}{\delta}}\right\}
 dvdI = 0,
\end{eqnarray*}
which leads to the conservation of mass, momentum and energy. The $H$-theorem for this model was established in \cite{ALPP}
(See also \cite{Brull3}):
\[
\int_{\mathbb{R}^3 \times \mathbb{R}^+}f(t)\ln f(t)dvdI\leq
\int_{\mathbb{R}^3 \times \mathbb{R}^+}f_0\ln f_0dvdI,\quad (t\geq 0).
\]
\newline
The original BGK model \cite{BGK} for monatomic gases, which is widely used in place of the Boltzmann equation for practical purposes, has one well-known shortcoming that  it gives incorrect Prandtl number in the Navier Stokes limit. To overcome this, Holway \cite{Holway}  introduced a free parameter $\nu$ and generalized the local Maxwellian into the anistropic Gaussian, which is well-defined in the range $-1/2<\nu<1$ (See \cite{ALPP,Brull2,Yun2,Yun3}). The resultant model is called the ellipsoidal BGK model (ES-BGK model). In generalizing this model further to cover the polyatomic case, however, we are confronted with the another incorrect physical coefficient: the relaxation collision number, which is defined as the number of collision needed to transform the rotational and vibrational internal energy into the translational energy. In this regard, another  relaxation parameter $\theta$ is introduced (See \cite{ALPP,Brull3,Cai,Shen}), leading to the ellipsoidal BGK model for polyatomic particles (\ref{ESBGK}).

In this paper, we are concerned with the Cercignani type entropy-entropy production estimate for the polyatomic ellipsoidal BGK model (\ref{ESBGK}). Obtaining lower bounds of the entropy production functional for kinetic equations in terms of the relative entropy
is important in that such estimates provide the coercivity (at least partial) that pushes the distribution function to the equilibrium state.
It was first suggested by Cercignani \cite{Cercignani} for the Boltzmann equation, and culminated in \cite{V} where Villani proved the ``almost true" version of the conjecture. In their proof, the entropy production estimate of the Landau equation established in \cite{DV} was crucially used (See \cite{Wu} for recent improvement on this issue).
In \cite{Yun}, the author proved that the ellipsoidal BGK model for monatomic particle system (\cite{ALPP,Brull2,Holway}).
 satisfies the Cercignani type
entropy production estimate, implying that the entropy production mechanism of the ellipsoidal BGK model resembles that of
the linear Boltzmann equation, rather than that of the full Boltzmann equation. (See \cite{Bisi}).
In this paper, we extend the result to the polyatomic ellipsoidal BGK model (See Theorem 1.1 below).
Due to the presence of various types of temperatures in the polyatomic ellipsoidal Gaussian, the fine cancellation of the temperature function in the
entropy comparison of various Maxwellians, which was crucially used in the proof in \cite{Yun}, is not available in the polyatomic case,
and we need to keep track of the behavior of those temperatures carefully throughout the argument (See Lemma 2.1).

We also make an interesting observation that different target equilibrium states, to which the distribution function converges time asymptotically, should be chosen according to the value of $\theta$:
when $0<\theta\leq 1$, the relative entropy should be measured with respect to $\mathcal{M}_{0,1}$ where
\begin{eqnarray*}
\mathcal{M}_{0,1}= \frac{\rho \Lambda_{\delta}}{(2\pi T_{\delta})^{\frac{3}{2}} (T_{\delta})^{\frac{\delta}{2}}}\exp\left(-\frac{|v-U| ^2}{2T_{\delta}}-\frac{I^{\frac{2}{\delta}}}{T_{\delta}}\right),
\end{eqnarray*}
while it is $\mathcal{M}_{0,0}$ when $\theta=0$ for
\begin{eqnarray*}
\mathcal{M}_{0,0}= \frac{\rho \Lambda_{\delta}}{(2\pi T_{tr})^{\frac{3}{2}} (T_{int})^{\frac{\delta}{2}}}\exp\left(-\frac{|v-U| ^2}{2T_{tr}}-\frac{I^{\frac{2}{\delta}}}{T_{int}}\right).
\end{eqnarray*}
This is because, when $\theta=0$, the translational energy and the internal energy is split, making the equation essentially, but not exactly, monatomic. More precisely, the internal energy part is cancelled out in measuring the difference of $H$-functional of various Maxwellians ( See Lemma 3.1 in Section 3). This implies a dichotomy in the time asymptotic state of the distribution function $f$, namely,
$\mathcal{M}_{0,1}$ for $0<\theta\leq 1$ and $\mathcal{M}_{0,0}$ for $\theta=0$. We note that such a phenomena is
not observed in the monatomic case \cite{Yun}.\newline
%
%
%
%

Let us define the $H$-functional $H(f)$, the relative entropy $H(f|g)$ and the entropy production functional $D_{\nu,\theta}(f)$:
\begin{eqnarray*}
&&H(f)=\int_{\mathbb{R}^3\times \mathbb{R}^+} f\ln f dvdI, \quad H(f|g)=\int_{\mathbb{R}^3\times \mathbb{R}^+} f\ln(f/g) dvdI,\cr
&&\qquad D_{\nu,\theta}(f) = -\int_{\mathbb{R}^3\times \mathbb{R}^+} A_{\nu,\theta}\big\{\mathcal{M}_{\nu,\theta}(f)-f\big\}\ln f dvdI.
\end{eqnarray*}
Our main result is as follows:
\begin{theorem}\label{main result}
For $0\leq\theta\leq 1$ and $-1/2<\nu<1$, the entropy production functional $D_{\nu,\theta}(f)$ of the ES-BGK model satisfies
\begin{enumerate}
\item In the case $0<\theta\leq 1$
\begin{eqnarray*}
D_{\nu,\theta}(f)\geq\theta A_{\nu,\theta} H(f|\mathcal{M}_{0,1}).
\end{eqnarray*}
\item In the case $\theta=0$
\begin{eqnarray*}
D_{\nu,0}(f)\geq\min\{1-\nu, 1+2\nu\} A_{\nu,0} H(f|\mathcal{M}_{0,0}).
\end{eqnarray*}
\end{enumerate}
\end{theorem}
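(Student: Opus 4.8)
The plan is to bound $D_{\nu,\theta}(f)$ below by a purely \emph{macroscopic} relative entropy and then reduce everything to a spectral inequality for the stress tensor. Write $D_{\nu,\theta}(f)=A_{\nu,\theta}\int_{\mathbb{R}^3\times\mathbb{R}^+}(f-\mathcal{M}_{\nu,\theta}(f))\ln f\,dvdI$. The first step is to insert the correct reference Gaussian through the cancellation property. When $0<\theta\le1$, $\ln\mathcal{M}_{0,1}$ is a linear combination of the collision invariants $1,v,\tfrac12|v|^2+I^{2/\delta}$, so $\int(f-\mathcal{M}_{\nu,\theta}(f))\ln\mathcal{M}_{0,1}\,dvdI=0$; when $\theta=0$ the same holds for $\ln\mathcal{M}_{0,0}$, the decisive point being that $T_{\theta}=T_{int}$ forces $\mathcal{M}_{\nu,0}(f)$ and $f$ to share the translational and the internal energy \emph{separately}, not merely their sum, which is precisely what makes the two–temperature Gaussian $\mathcal{M}_{0,0}$ an admissible reference. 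This is the origin of the dichotomy. Denoting the reference by $\mathcal{M}_{\mathrm{ref}}$, I split $\ln(f/\mathcal{M}_{\mathrm{ref}})=\ln(f/\mathcal{M}_{\nu,\theta}(f))+\ln(\mathcal{M}_{\nu,\theta}(f)/\mathcal{M}_{\mathrm{ref}})$ and use $\ln t\le t-1$ to discard the first piece, obtaining
\[
D_{\nu,\theta}(f)\ge A_{\nu,\theta}\big[H(f|\mathcal{M}_{\mathrm{ref}})-H(\mathcal{M}_{\nu,\theta}(f)|\mathcal{M}_{\mathrm{ref}})\big].
\]

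Since $H(f|\mathcal{M}_{\mathrm{ref}})-H(\mathcal{M}_{\nu,\theta}(f)|\mathcal{M}_{\mathrm{ref}})\ge kH(f|\mathcal{M}_{\mathrm{ref}})$ whenever $H(\mathcal{M}_{\nu,\theta}(f)|\mathcal{M}_{\mathrm{ref}})\le(1-k)H(f|\mathcal{M}_{\mathrm{ref}})$, it suffices to establish the latter with $k=\theta$ in the first case and $k=\min\{1-\nu,1+2\nu\}$ in the second. The left–hand side is a relative entropy of two explicit Gaussians, hence computable from the Gaussian entropy formula (this is where the explicit entropy bookkeeping of Lemma 2.1 / Lemma 3.1 enters):
\[
H(\mathcal{M}_{\nu,\theta}(f)|\mathcal{M}_{0,1})=\tfrac{\rho}{2}\ln\tfrac{T_\delta^3}{\det\mathcal{T}_{\nu,\theta}}+\tfrac{\rho\delta}{2}\ln\tfrac{T_\delta}{T_\theta},
\]
while for $\theta=0$ the internal part cancels and $H(\mathcal{M}_{\nu,0}(f)|\mathcal{M}_{0,0})=\tfrac{\rho}{2}\ln\tfrac{T_{tr}^3}{\det\mathcal{T}_{\nu,0}}$. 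For the matching lower bound on $H(f|\mathcal{M}_{\mathrm{ref}})$ I project $f$ onto the genuine anisotropic local Gaussian $\mathcal{M}_{1,0}(f)$ (covariance $\Theta$, internal temperature $T_{int}$); as it shares $\rho,U,\Theta,T_{int}$ with $f$, the Pythagorean identity $H(f|\mathcal{M}_{\mathrm{ref}})=H(f|\mathcal{M}_{1,0}(f))+H(\mathcal{M}_{1,0}(f)|\mathcal{M}_{\mathrm{ref}})$ holds, and dropping the nonnegative first term leaves the explicit quantity $H(\mathcal{M}_{1,0}(f)|\mathcal{M}_{\mathrm{ref}})$.

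This reduces both cases to inequalities among $T_\delta,T_{tr},T_{int},T_\theta$ and the determinants $\det\mathcal{T}_{\nu,\theta},\det\Theta$. For $0<\theta\le1$, writing $\mathcal{T}_{\nu,\theta}=\theta T_\delta Id+(1-\theta)\mathcal{T}_{\nu,0}$ and $T_\theta=\theta T_\delta+(1-\theta)T_{int}$ and invoking concavity of $A\mapsto\ln\det A$ and of $\ln$ peels off the factor $1-\theta$, so the claim collapses to $\det\mathcal{T}_{\nu,0}\ge\det\Theta$. This is clean: $\mathcal{T}_{\nu,0}=(1-\nu)T_{tr}Id+\nu\Theta$ fixes the mean eigenvalue $T_{tr}$ and scales the deviations from the mean by $\nu$, i.e.\ its spectrum is $\big[\nu\,Id+(1-\nu)\tfrac13 J\big]$ applied to that of $\Theta$, where $J$ is the $3\times3$ all–ones matrix. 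That matrix is doubly stochastic exactly on $-1/2\le\nu\le1$ (its diagonal entry is $(2\nu+1)/3$), so the spectrum of $\mathcal{T}_{\nu,0}$ is majorized by that of $\Theta$ and Schur–concavity of the product yields $\det\mathcal{T}_{\nu,0}\ge\det\Theta$; remarkably, the well–posedness threshold $\nu=-1/2$ coincides with the double–stochasticity threshold.

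The genuinely delicate point, and what I expect to be the main obstacle, is the \emph{sharp} constant in the case $\theta=0$, namely
\[
\ln\tfrac{T_{tr}^3}{\det\mathcal{T}_{\nu,0}}\le\big(1-\min\{1-\nu,1+2\nu\}\big)\ln\tfrac{T_{tr}^3}{\det\Theta}.
\]
Diagonalizing $\Theta$ and setting $x_i=\lambda_i/T_{tr}$ (so $\sum_i x_i=3$), this reads $\sum_i\ln((1-\nu)+\nu x_i)\ge\beta\sum_i\ln x_i$ with $\beta=\max\{\nu,-2\nu\}$. For $\nu\ge0$ it separates termwise: the tangent–line function $x\mapsto\ln((1-\nu)+\nu x)-\nu\ln x$ attains its global minimum $0$ at $x=1$, giving $\beta=\nu=1-(1-\nu)$ without even using the trace constraint. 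For $\nu<0$ this separation fails, since the termwise function diverges to $-\infty$ as any $x_i$ approaches the positive–definiteness boundary $(1-\nu)/|\nu|$; the constraint $\sum_i x_i=3$ must then be used essentially—an eigenvalue reaching the boundary forces the others toward $0$, whence $\det\Theta\to0$ saves the inequality—and pushing the analysis to the endpoint $\nu=-1/2$ is what produces the sharp constant $1+2\nu$. Once this spectral inequality is secured, the $\theta=0$ estimate is immediate, and, since $\ln(T_{tr}^3/\det\Theta)\ge0$ by AM–GM, it also re-proves $\det\mathcal{T}_{\nu,0}\ge\det\Theta$, so that both assertions of the theorem follow.
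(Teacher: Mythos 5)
Your overall architecture is, modulo phrasing, the same as the paper's: because every Gaussian in sight ($\mathcal{M}_{\nu,\theta}(f)$, $\mathcal{M}_{\Theta}$, $\mathcal{M}_{0,1}$, $\mathcal{M}_{0,0}$) shares the relevant collision-invariant moments with $f$, your lower bound $D_{\nu,\theta}(f)\ge A_{\nu,\theta}\bigl[H(f|\mathcal{M}_{\mathrm{ref}})-H(\mathcal{M}_{\nu,\theta}(f)|\mathcal{M}_{\mathrm{ref}})\bigr]$ is identical to the paper's convexity bound $D_{\nu,\theta}(f)\ge A_{\nu,\theta}\{H(f)-H(\mathcal{M}_{\nu,\theta})\}$, your Pythagorean step $H(f|\mathcal{M}_{\mathrm{ref}})\ge H(\mathcal{M}_{\Theta}|\mathcal{M}_{\mathrm{ref}})$ is exactly Lemma 2.2, and your target inequality $H(\mathcal{M}_{\nu,\theta}(f)|\mathcal{M}_{\mathrm{ref}})\le(1-k)H(\mathcal{M}_{\Theta}|\mathcal{M}_{\mathrm{ref}})$ is Lemma 2.1 (resp.\ Lemma 3.1) rewritten in relative-entropy form. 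For $0<\theta\le1$ your proof is complete: peeling off $(1-\theta)$ via concavity of $\ln\det$ and of $\ln$, and then proving $\det\mathcal{T}_{\nu,0}\ge\det\Theta$ by observing that the spectrum of $\mathcal{T}_{\nu,0}$ is the image of the spectrum of $\Theta$ under the doubly stochastic matrix $\nu Id+\frac{1-\nu}{3}J$ (doubly stochastic precisely for $-1/2\le\nu\le1$), combined with Schur-concavity of the product, is a clean alternative to the paper's two-case Jensen computation and handles both signs of $\nu$ uniformly.

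The genuine gap is the case $\theta=0$, $-1/2<\nu<0$: the inequality $\sum_i\ln\bigl((1-\nu)+\nu x_i\bigr)\ge-2\nu\sum_i\ln x_i$ under $\sum_i x_i=3$, which you yourself flag as ``the main obstacle,'' is never proved. What you offer in its place is a boundary heuristic, and it does not even probe the real content: under the trace constraint each argument satisfies $(1-\nu)+\nu x_i\ge 1+2\nu>0$, so the left side stays bounded, while degenerating configurations ($x_i\to0$) send only the \emph{right} side to $-\infty$ and make the inequality trivially true there; the substance of the inequality lies at interior points, about which your sketch says nothing. The paper disposes of this in two lines with the same device it uses in case (2) of Lemma 2.1: by the trace identity write $(1-\nu)+\nu x_i=(1+2\nu)\cdot 1+(-\nu)(x_j+x_k)$ with $\{j,k\}=\{1,2,3\}\setminus\{i\}$; since $1+2\nu\ge0$, $-\nu\ge0$ and $(1+2\nu)+(-\nu)+(-\nu)=1$, this is a genuine convex combination, so concavity of $\ln$ gives $\ln\bigl((1-\nu)+\nu x_i\bigr)\ge(-\nu)(\ln x_j+\ln x_k)$, and summing over $i$ (each $x_j$ appears exactly twice) yields the claim with the sharp constant $-2\nu$. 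With that lemma inserted your proposal closes; without it, the $\theta=0$ half of the theorem is established only for $0\le\nu<1$.
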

These entropy production estimates readily give the asymptotic behavior of $f$ in the homogeneous case:
\begin{corollary} The distribution function for the spatially homogeneous polyatomic ellipsoidal BGK model stabilizes
exponentially fast to the equilibrium states:
\begin{enumerate}
\item In the case $0<\theta\leq 1$
\begin{eqnarray*}
\|f(t)-\mathcal{M}_{0,1}\|_{L^1_{v,I}}\leq e^{-\frac{\theta}{2}A_{\nu,\theta}t }
\sqrt{2H(f_0|\mathcal{M}_{0,1})}.
\end{eqnarray*}
\item  In the case $\theta=0$
\begin{eqnarray*}
\|f(t)-\mathcal{M}_{0,0}\|_{L^1_{v,I}}\leq e^{-\frac{1}{2}A_{\nu,0}\min\{1-\nu,1+2\nu\}t}
\sqrt{2H(f_0|\mathcal{M}_{0,0})}.
\end{eqnarray*}
\end{enumerate}
where $\|f(t)\|_{L^1_{v,I}}=\int_{\mathbb{R}^3_v\times \mathbb{R}_+}|f(v,t,I)|dvdI$.
\end{corollary}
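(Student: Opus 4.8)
The plan is to exploit the entropy dissipation identity $\frac{d}{dt}H(f)=-D_{\nu,\theta}(f)$ in the spatially homogeneous regime, turn it into exponential decay of the relative entropy through Theorem \ref{main result} and Gr\"onwall's lemma, and finally convert this into an $L^1$ bound via the Csisz\'ar--Kullback--Pinsker inequality. In the homogeneous case the model reduces to $\partial_t f=A_{\nu,\theta}(\mathcal{M}_{\nu,\theta}(f)-f)$. Differentiating $H(f)=\int f\ln f\,dvdI$ gives $\frac{d}{dt}H(f)=\int A_{\nu,\theta}(\mathcal{M}_{\nu,\theta}(f)-f)(\ln f+1)\,dvdI$, and the contribution of the constant $1$ vanishes by the cancellation property $\int(\mathcal{M}_{\nu,\theta}(f)-f)\,dvdI=0$, so that $\frac{d}{dt}H(f)=-D_{\nu,\theta}(f)$.

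The essential preliminary step is to verify that the correct target Maxwellian is \emph{time-independent}, so that the relative entropy has the same time derivative as $H(f)$. First, $\rho$, $U$ and $E_\delta$ (hence $T_\delta$ and the collision frequency $A_{\nu,\theta}=\rho T_\delta/\{\mu(1-\nu+\theta\nu)\}$) are conserved by the cancellation property, which already makes $\mathcal{M}_{0,1}$ constant in $t$ when $0<\theta\le 1$. For $\theta=0$ one needs the sharper fact that the internal energy is \emph{separately} conserved: since the relaxation temperature is $T_{0}=T_{int}$, the $I$-marginal of $\mathcal{M}_{\nu,0}(f)$ has internal energy exactly $\frac{\delta}{2}T_{int}=E_{int}$, whence $\int I^{2/\delta}(\mathcal{M}_{\nu,0}(f)-f)\,dvdI=0$, so that $E_{int}$, $E_{tr}$, $T_{int}$, $T_{tr}$ are all conserved and $\mathcal{M}_{0,0}$ is constant in $t$. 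I would then write $H(f|\mathcal{M})=H(f)-\int f\ln\mathcal{M}\,dvdI$ and observe that $\int f\ln\mathcal{M}\,dvdI$ is a fixed linear combination of the conserved moments $\rho$ and $\rho E_\delta$ (for $\mathcal{M}_{0,1}$), respectively $\rho$, $\rho E_{tr}$, $\rho E_{int}$ (for $\mathcal{M}_{0,0}$); it is therefore constant in time, and hence $\frac{d}{dt}H(f|\mathcal{M})=-D_{\nu,\theta}(f)$.

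With this identity in hand the estimate follows directly. For $0<\theta\le 1$, Theorem \ref{main result} gives $\frac{d}{dt}H(f|\mathcal{M}_{0,1})=-D_{\nu,\theta}(f)\le-\theta A_{\nu,\theta}H(f|\mathcal{M}_{0,1})$, which integrates to $H(f(t)|\mathcal{M}_{0,1})\le e^{-\theta A_{\nu,\theta}t}H(f_0|\mathcal{M}_{0,1})$, and likewise with rate $\min\{1-\nu,1+2\nu\}A_{\nu,0}$ when $\theta=0$; here it is crucial that $A_{\nu,\theta}$ is constant in time so the exponential rate is genuinely constant. Applying the Csisz\'ar--Kullback--Pinsker inequality $\|f-\mathcal{M}\|_{L^1_{v,I}}\le\sqrt{2H(f|\mathcal{M})}$ and taking the square root of the decay estimate then yields the stated bounds, the factor $1/2$ in the exponent arising precisely from this square root.

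I expect the main obstacle to be the $\theta=0$ case of the time-independence step: one must argue carefully that the internal-energy moment of the anisotropic Gaussian $\mathcal{M}_{\nu,0}(f)$ coincides with that of $f$, which is exactly the energy-splitting phenomenon highlighted in the introduction and encoded in Lemma 3.1. Everything else is a routine differentiation-and-Gr\"onwall argument, provided one also confirms the integrability needed to differentiate $H(f)$ under the integral sign along solutions.
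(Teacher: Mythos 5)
Your proposal is correct and follows essentially the same route as the paper: the dissipation identity $\frac{d}{dt}H(f|\mathcal{M})=-D_{\nu,\theta}(f)$, Theorem \ref{main result}, Gr\"onwall's lemma, and the Kullback inequality $\|f-g\|_{L^1}\leq\sqrt{2H(f|g)}$. The only difference is that you explicitly verify the time-independence of the target Maxwellians---in particular the separate conservation of $E_{int}$ when $\theta=0$, which makes $\mathcal{M}_{0,0}$ constant in time---a point the paper leaves implicit when it declares the $\theta=0$ proof ``identical'' to the $0<\theta\leq1$ case.
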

Some remarks are in order. First, these results are a priori estimates, which means that they hold when everything is fine: For this to be mathematically rigorous,  integrability of the distribution function should be good enough to justify all the integral in the proof, and the strict positivity of the temperatures $T_{int}$, $T_{tr}$ should be assumed, which should be checked at the level of existence theory.
 These issues were checked  for  monatomic ES-BGK model in \cite{Yun2,Yun}. The investigation on the existence theory for the polyatomic case is in progress. Secondly, these results can be generalized in a straightforward manner to general $d$-dimensions. For this, the temperatures should be redefined as
$E_{\delta}=\frac{d+\delta}{2}T_{\delta}$, $ E_{tr}=\frac{d}{2}T_{tr}$ and the constant $3$ in several places, for example in (\ref{another use}) and (\ref{Hfunc}) should be replaced by $d$. The argument then goes in the exactly same manner, giving the essentially same result with the constants adjusted according to $d$. Instead of treating the most general case, however, we restrict ourselves to three dimensional case for clarity of the proof.\newline

This paper is organized as follows. In section 2, we prove the entropy production estimate in the case $0<\theta\leq1$.
It is also shown that the spatially homogeneous distribution function converges exponentially fast to $\mathcal{M}_{0,1}$.
In section 3, analogous result is proved for the case $\theta=0$, with the target Maxwellian replaced by $\mathcal{M}_{0,0}$.
%
%
%
%
\section{Entropy production estimate in the case: $0<\theta\leq1$}
We need to introduce the following multi-variate Gaussian with the stress tensor as its covariance matrix, which plays an important role in the proof of our main theorem:
\begin{eqnarray*}
\mathcal{M}_{\Theta}(f)= \frac{\rho \Lambda_{\delta}}{\sqrt{\det 2\pi \Theta} (T_{int})^{\frac{\delta}{2}}}\exp\left(-\frac{1}{2}(v-U)^{\top}\Theta^{-1}(v-U)-\frac{I^{\frac{2}{\delta}}}{T_{int}}\right).
\end{eqnarray*}
Note that $\mathcal{M}_{\Theta}$ corresponds to $\mathcal{M}_{1,0}$.
We start with the following lemma connecting the $H$-functionals of $\mathcal{M}_{\nu,\theta}$, $\mathcal{M}_{\Theta}$ and $\mathcal{M}_{0,1}$.
\begin{lemma} The $H$-functionals for $\mathcal{M}_{0,1}$, $\mathcal{M}_{\Theta}$ and $\mathcal{M}_{\nu,\theta}$ satisfy
\begin{eqnarray*}
H(\mathcal{M}_{0,1})-H(\mathcal{M}_{\nu,\theta}) \geq (1-\theta) \{H(\mathcal{M}_{0,1})-H(\mathcal{M}_{\Theta})\},~ (0<\theta\leq1)
\end{eqnarray*}
\end{lemma}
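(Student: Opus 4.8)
The plan is to reduce the inequality to a purely algebraic statement about the temperatures and the covariance matrices by evaluating each $H$-functional explicitly, and then to prove that algebraic statement by two separate convexity arguments: one scalar and one matrix-valued.

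First I would record the $H$-functional of an arbitrary polyatomic Gaussian. For a Gaussian $G$ with mass $\rho$, bulk velocity $U$, velocity-covariance $T$ and internal temperature $T_E$, a direct computation — using that the quadratic velocity average contributes $\tfrac32$ and the internal average contributes $\tfrac{\delta}{2}$ through the equipartition identity $\Lambda_{\delta}\int_0^{\infty} I^{2/\delta}e^{-I^{2/\delta}}\,dI=\tfrac{\delta}{2}$ — gives
\[
H(G)=\rho\ln(\rho\Lambda_{\delta})-\frac{\rho}{2}\ln\det(2\pi T)-\frac{\delta\rho}{2}\ln T_E-\frac{3+\delta}{2}\rho.
\]
Applying this to $\mathcal{M}_{0,1}$ (with $T=T_{\delta}\,Id$, $T_E=T_{\delta}$), to $\mathcal{M}_{\Theta}=\mathcal{M}_{1,0}$ (with $T=\Theta$, $T_E=T_{int}$), and to $\mathcal{M}_{\nu,\theta}$ (with $T=\mathcal{T}_{\nu,\theta}$, $T_E=T_{\theta}$), all common terms cancel in the differences, and the claimed inequality collapses (after dividing by $\rho/2>0$) to
\[
\ln\frac{\det\mathcal{T}_{\nu,\theta}}{T_{\delta}^{3}}+\delta\ln\frac{T_{\theta}}{T_{\delta}}\;\geq\;(1-\theta)\Big(\ln\frac{\det\Theta}{T_{\delta}^{3}}+\delta\ln\frac{T_{int}}{T_{\delta}}\Big).
\]
I would then prove the internal part and the velocity part of this inequality separately.

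The internal part is the scalar inequality $\ln(T_{\theta}/T_{\delta})\geq(1-\theta)\ln(T_{int}/T_{\delta})$, which follows at once from concavity of the logarithm applied to the convex combination $T_{\theta}/T_{\delta}=\theta\cdot1+(1-\theta)(T_{int}/T_{\delta})$, where $0<\theta\le1$ ensures nonnegative weights. For the velocity part I would use the representation $\mathcal{T}_{\nu,\theta}=\theta\,(T_{\delta}\,Id)+(1-\theta)\mathcal{T}_{\nu,0}$ with $\mathcal{T}_{\nu,0}=(1-\nu)T_{tr}\,Id+\nu\Theta$, together with concavity of $A\mapsto\ln\det A$ on positive definite symmetric matrices, to get $\ln\det\mathcal{T}_{\nu,\theta}\geq 3\theta\ln T_{\delta}+(1-\theta)\ln\det\mathcal{T}_{\nu,0}$, hence $\ln(\det\mathcal{T}_{\nu,\theta}/T_{\delta}^{3})\geq(1-\theta)\ln(\det\mathcal{T}_{\nu,0}/T_{\delta}^{3})$.

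The remaining and main obstacle is the determinant comparison $\det\mathcal{T}_{\nu,0}\geq\det\Theta$, which must hold over the whole admissible range $-1/2<\nu<1$, including negative $\nu$, where $\mathcal{T}_{\nu,0}$ is \emph{not} a convex combination of $T_{tr}\,Id$ and $\Theta$. My plan is to diagonalize $\Theta$ with eigenvalues $\lambda_1,\lambda_2,\lambda_3>0$ obeying the trace identity $\operatorname{tr}\Theta=\lambda_1+\lambda_2+\lambda_3=3T_{tr}$, so that $\det\mathcal{T}_{\nu,0}=\prod_i\big(T_{tr}+\nu(\lambda_i-T_{tr})\big)$. The map $\nu\mapsto\ln\det\mathcal{T}_{\nu,0}$ is a sum of logarithms of affine functions, hence concave, and the trace identity forces its derivative to vanish at $\nu=0$; therefore its minimum over $[-1/2,1]$ is attained at an endpoint. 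At $\nu=1$ the value is $\ln\det\Theta$, and at $\nu=-1/2$ it equals $\ln\big(\tfrac18(\lambda_2+\lambda_3)(\lambda_1+\lambda_3)(\lambda_1+\lambda_2)\big)$, which is $\geq\ln\det\Theta$ by the AM--GM bound $\lambda_j+\lambda_k\geq 2\sqrt{\lambda_j\lambda_k}$. Thus both endpoint values dominate $\ln\det\Theta$, so $\det\mathcal{T}_{\nu,0}\geq\det\Theta$ throughout the range, which closes the velocity estimate; adding it to $\delta$ times the internal estimate yields the reduced inequality and hence the lemma. I expect the endpoint analysis at $\nu=-1/2$ to be the delicate point, as it is precisely there that the lower Holway bound $\nu>-1/2$ is needed to keep $\mathcal{T}_{\nu,0}$ positive definite.
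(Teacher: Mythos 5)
Your proof is correct, and while it shares the paper's overall frame, the decisive step is handled by a genuinely different argument. Like the paper, you evaluate the $H$-functionals explicitly (your formula is exactly (\ref{Hfunc})), reduce the lemma to the scalar inequality $\ln\det\mathcal{T}_{\nu,\theta}+\delta\ln T_{\theta}-(3+\delta)\ln T_{\delta}\geq(1-\theta)\big\{\ln\det\Theta+\delta\ln T_{int}-(3+\delta)\ln T_{\delta}\big\}$, and treat the internal part by the same concavity bound $\ln T_{\theta}\geq\theta\ln T_{\delta}+(1-\theta)\ln T_{int}$, which is (\ref{to see}). The difference is in the determinant part: the paper keeps the three-term combination $(1-\theta)\{(1-\nu)T_{tr}+\nu\Theta_i\}+\theta T_{\delta}$ and splits according to the sign of $\nu$, using for $-1/2<\nu\leq0$ the rewriting $(1-\nu)T_{tr}+\nu\Theta_i=(1+2\nu)T_{tr}-\nu\sum_{j\neq i}\Theta_j$ (again a convex combination, thanks to $\mathrm{tr}\,\Theta=3T_{tr}$), and closes both cases with the AM--GM bound (\ref{another use}). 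You instead peel off the $\theta$-interpolation first, via $\mathcal{T}_{\nu,\theta}=\theta T_{\delta}Id+(1-\theta)\mathcal{T}_{\nu,0}$ (where the $\ln\det$-concavity you invoke is really just scalar concavity, since $T_{\delta}Id$ is isotropic), reducing the whole lemma to the monatomic-type inequality $\det\mathcal{T}_{\nu,0}\geq\det\Theta$, which you then prove uniformly in $\nu$ by concavity of $\nu\mapsto\ln\det\mathcal{T}_{\nu,0}$ and two endpoint checks ($\nu=1$ trivially, $\nu=-1/2$ by pairwise AM--GM). Your route buys a unified treatment of the full range $-1/2<\nu<1$ with no case split, and isolates a clean reusable module; the paper's finer manipulation buys the $\nu$-dependent constants that matter in Section 3: at $\theta=0$ one needs $H(\mathcal{M}_{0,0})-H(\mathcal{M}_{\nu,0})\geq c\,\{H(\mathcal{M}_{0,0})-H(\mathcal{M}_{\Theta})\}$ with $c=\max\{\nu,-2\nu\}<1$, and since $H(\mathcal{M}_{0,0})-H(\mathcal{M}_{\Theta})\leq 0$, your coarser bound (which amounts to $c=1$) would collapse the constant $1-c$ in Theorem 1.1(2) to zero --- though your framework can recover the sharper constants by using the concavity weights between $\nu=0$ and the endpoints rather than the minimum of the endpoint values. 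Two minor remarks: the vanishing derivative at $\nu=0$ is not needed to place the minimum of a concave function at an endpoint (concavity alone does that; the critical point only locates the maximum), and the endpoint $\nu=-1/2$ is not actually degenerate, since the eigenvalues of $\mathcal{T}_{-1/2,0}$ are $\tfrac12(\lambda_j+\lambda_k)>0$ whenever $\Theta$ is positive definite, so your endpoint evaluation is legitimate as stated.
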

\begin{proof}
A straightforward calculation gives
\begin{eqnarray}\label{Hfunc}
\begin{split}
H(\mathcal{M}_{\nu,\theta})&= \rho\ln\rho\Lambda_{\delta}-\frac{1}{2}\rho\ln(\det(2\pi\mathcal{T}_{\nu,\theta}))-\frac{\delta}{2}\rho\ln T_{\theta}-\frac{3+\delta}{2}\rho, \cr
H(\mathcal{M}_{\Theta})&= \rho\ln\rho\Lambda_{\delta}-\frac{1}{2}\rho\ln(\det(2\pi\Theta))-\frac{\delta}{2}\rho\ln T_{int}-\frac{3+\delta}{2}\rho, \cr
H(\mathcal{M}_{0,1})&= \rho\ln\rho\Lambda_{\delta}-\frac{3}{2}\rho\ln(2\pi T_{\delta})-\frac{\delta}{2}\rho\ln T_{\delta}-\frac{3+\delta}{2}\rho,
\end{split}
\end{eqnarray}
so that
\begin{eqnarray}\label{Max-Gau}
H(\mathcal{M}_{0,1})-H(\mathcal{M}_{\nu,\theta}) = \frac{\rho}{2}\big\{\ln (\det \mathcal{T}_{\nu,\theta})+\delta \ln T_{\theta}-(3+\delta)\ln T_{\delta}\big\}
\equiv I_{\delta,\theta}.
\end{eqnarray}
Due to the symmetry of $\Theta$, there exists an orthogonal matrix $P$ such that $P^{\top}\Theta P $ is a diagonal matrix. We denote its eigenvalues by
$\Theta_i$ $(i=1,2,3)$ to compute
\begin{align*}
\det\mathcal{T}_{\nu,\theta}&=\det\Big\{P^{\top}\mathcal{T}_{\nu,\theta} P\Big\}\cr
 &=  \det\Big\{(1-\theta)\big\{(1-\nu)T_{tr}Id+\nu P^{\top}\Theta P\big\}+\theta T_{\delta}Id\Big\}\cr
&= \prod_{1\leq i\leq 3}\big\{(1-\theta)\big((1-\nu)T_{tr}+\nu\Theta_i\big)+\theta T_{\delta}\big\}.
\end{align*}
Hence,
\begin{eqnarray*}
\ln\det\mathcal{T}_{\nu,\theta}
=\sum_{1\leq i\leq 3} \ln\big\{(1-\theta)(1-\nu)T_{tr}+(1-\theta)\nu\Theta_i+\theta T_{\delta}\big\}.
\end{eqnarray*}
Now, we divide the remaining argument into the following two cases:\newline

\noindent$(1)$ $0\leq\nu<1$: Recalling the concavity of $\ln$, we have
\begin{align*}
\ln\det\mathcal{T}_{\nu,\theta}
&\geq\sum_{1\leq i\leq 3} \big\{(1-\theta)(1-\nu)\ln T_{tr}+(1-\theta)\nu \ln\Theta_i+\theta \ln T_{\delta}\big\}\cr
 &= 3(1-\theta)(1-\nu)\ln T_{tr} +(1-\theta)\nu\ln \Theta_1\Theta_2\Theta_3+3\theta \ln T_{\delta}\cr
 &= 3(1-\theta)(1-\nu)\ln T_{tr} +
(1-\theta)\nu\ln\det\Theta+3\theta \ln T_{\delta}.
\end{align*}
Inserting this, we estimate (\ref{Max-Gau}) as
\begin{align*}
I_{\delta,\theta}&\geq  \frac{\rho}{2}\Big\{3(1-\theta)(1-\nu)\ln T_{tr} +
(1-\theta)\nu\ln\det\Theta+3\theta \ln T_{\delta} +\delta\ln T_{\theta}-(3+\delta)\ln T_{\delta}\Big\}  \cr
&= \frac{\rho}{2}\Big\{~(1-\theta)\big(3(1-\nu)\ln T_{tr} +
\nu\ln\det\Theta-3 \ln T_{\delta}\big) +\delta\ln T_{\theta}-\delta\ln T_{\delta}\Big\}.
\end{align*}
We then employ $\ln T_{\theta} \geq (1-\theta)\ln T_{int} +\theta \ln T_{\delta}$ to see that
\begin{eqnarray}\label{to see}
\delta\ln T_{\theta}-\delta\ln T_{\delta}
\geq\delta(1-\theta)\big\{\ln T_{int}-\ln T_{\delta}\big\}
\end{eqnarray}
from which we get
\begin{eqnarray*}
I_{\delta,\theta}
\geq  (1-\theta)\frac{\rho}{2}\big\{ 3(1-\nu)\ln T_{tr} +
\nu\ln\det\Theta-3 \ln T_{\delta}+\delta\big(\ln T_{int}-\ln T_{\delta}\big)\Big\}.
\end{eqnarray*}
Then, in regard of the following relation between $T_{tr}$ and $\Theta$ \cite{ALPP}:
\begin{eqnarray}\label{another use}
3\ln T_{tr}=\ln\Big(\frac{\Theta_1+\Theta_2+\Theta_3}{3}\Big)^3 \geq \ln \Theta_1\Theta_2\Theta_3 = \ln\det \Theta,
\end{eqnarray}
which is a direct consequence of arithmetic-geometric inequality, we can proceed further as
\begin{align*}
I_{\delta,\theta}&\geq
(1-\theta)\frac{\rho}{2}\big\{ (1-\nu)\ln \det\Theta + \nu\ln\det\Theta-3 \ln T_{\delta}+\delta\big(\ln T_{int}-\ln T_{\delta}\big)\big\}\cr
&=(1-\theta)\frac{\rho}{2}\big\{\ln\det\Theta+\delta\ln T_{int}-(3+\delta)\ln T_{\delta}\big\}.
\end{align*}
Another explicit computation using (\ref{Hfunc}) shows that this is exactly
$(1-\theta)\{H(\mathcal{M}_{0,1})-H(\mathcal{M}_{\Theta})\}$,
which gives the desired estimate for positive $\nu$.\newline

\noindent (2) $-1/2 < \nu \leq 0$: In this case, $(1-\nu)T_{tr}+\nu\Theta_i $ is not a convex combination of $T_{tr}$ and $\Theta_i$. Instead, we use $\Theta_1 +\Theta_2 +\Theta_3 =3T_{tr} $
to see
\[
(1-\nu)T_{tr}+\nu\Theta_i=(1+2\nu)T_{tr}-\nu\sum_{j\neq i}\Theta_j
\]
so that
\begin{eqnarray*}
\det\mathcal{T}_{\nu,\theta}=\prod_{1\leq i\leq3}\Big\{(1-\theta)\Big((1+2\nu)T_{tr}-\nu \sum_{j\neq i}\Theta_j\Big)+\theta T_{\delta}\Big\}.
\end{eqnarray*}
Taking $\ln$ on both sides and using the concavity inequality, we get
\begin{eqnarray*}
\ln\det\mathcal{T}_{\nu,\theta}&=&
\sum_{1\leq i\leq 3}\ln\Big\{(1-\theta)\Big((1+2\nu)T_{tr}-\nu \sum_{j\neq i}\Theta_j\Big)+\theta T_{\delta}\Big\} \cr
&\geq&\sum_{1\leq i\leq 3}\Big\{(1-\theta)\Big((1+2\nu)\ln T_{tr}-\nu \sum_{j\neq i}\ln\Theta_j\Big)+\theta \ln T_{\delta}\Big\} \cr
&=& (1-\theta)\big\{3(1+2\nu)\ln T_{tr}-2\nu\ln\det\Theta\big\} +3\theta\ln T_{\delta}.
\end{eqnarray*}
Now, we can compute similarly as in the previous case as
\begin{eqnarray*}
\begin{split}
& \ln \det \mathcal{T}_{\nu,\theta} +\delta\ln T_{\theta}-(3+\delta)\ln T_{\delta} \cr
&\qquad\geq \Big\{(1-\theta)\Big(3(1+2\nu)\ln T_{tr}-2\nu\ln\det\Theta\Big) +3\theta\ln T_{\delta}\Big\} +\delta\ln T_{\theta}-(3+\delta)\ln T_{\delta}\cr
&\qquad= (1-\theta)\big\{3(1+2\nu)\ln T_{tr}-2\nu\ln\det\Theta-3\ln T_{\delta}\big\} +\delta\ln T_{\theta}-\delta\ln T_{\delta}.
\end{split}
\end{eqnarray*}
We recall (\ref{to see}) to bound the last line from below by
\begin{eqnarray*}
&&(1-\theta)\big\{3(1+2\nu)\ln T_{tr}-2\nu\ln\det\Theta-3\ln T_{\delta}\big\} +\delta(1-\theta)\big\{\ln T_{int}-\ln T_{\delta}\big\} \cr
&&\qquad= (1-\theta) \Big\{\Big(3(1+2\nu)\ln T_{tr}-2\nu\ln\det\Theta-3\ln T_{\delta}\Big)
+\delta\big(\ln T_{int}-\ln T_{\delta}\big)\Big\}\cr
&&\qquad= (1-\theta) \big\{3(1+2\nu)\ln T_{tr}-2\nu\ln\det\Theta  +\delta\ln T_{int}-(3+\delta)\ln T_{\delta}\big\}.
\end{eqnarray*}
Therefore, by making another use of (\ref{another use}), we obtain
\begin{eqnarray*}
I_{\delta,\theta}
\geq (1-\theta)\frac{\rho}{2}\big\{\ln \det\Theta+\delta\ln T_{int})-(3+\delta)\ln T_{\delta}\big\},
\end{eqnarray*}
which, again from (\ref{Hfunc}), can be shown to be
$(1-\theta)\left\{H(\mathcal{M}_{0,1})-H(\mathcal{M}_{\Theta})\right\}$.
This completes the proof.
\end{proof}
The following lemma can be found in \cite{ALPP}.
\begin{lemma} \cite{ALPP}
 The H-functionals of the f, $\mathcal{M}_{\Theta}$ and $\mathcal{M}_{0,1}$ are related by
\begin{eqnarray*}
H(\mathcal{M}_{0,1}) \leq H(\mathcal{M}_{\Theta}) \leq H(f).
\end{eqnarray*}
\end{lemma}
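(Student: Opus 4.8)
The plan is to prove the two inequalities separately. The first, $H(\mathcal{M}_{0,1}) \leq H(\mathcal{M}_{\Theta})$, is a purely algebraic comparison that reuses the explicit formulas already recorded in (\ref{Hfunc}); the second, $H(\mathcal{M}_{\Theta}) \leq H(f)$, is the information-theoretic statement that, among all distributions sharing the density, bulk velocity, full stress tensor and internal energy of $f$, the anisotropic Gaussian $\mathcal{M}_{\Theta}$ minimizes the $H$-functional.

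For the first inequality, I would simply subtract the second and third lines of (\ref{Hfunc}). The terms $\rho\ln\rho\Lambda_{\delta}$ and $\tfrac{3+\delta}{2}\rho$ cancel, and since $\Theta$ is a $3\times 3$ matrix the factors of $2\pi$ cancel as well, leaving
\[
H(\mathcal{M}_{\Theta}) - H(\mathcal{M}_{0,1}) = \frac{\rho}{2}\left\{(3+\delta)\ln T_{\delta} - \ln\det\Theta - \delta\ln T_{int}\right\}.
\]
It then suffices to show $(3+\delta)\ln T_{\delta} \geq \ln\det\Theta + \delta\ln T_{int}$. Recalling that $T_{\delta}=\tfrac{3}{3+\delta}T_{tr}+\tfrac{\delta}{3+\delta}T_{int}$ is a convex combination, the concavity of $\ln$ gives $(3+\delta)\ln T_{\delta}\geq 3\ln T_{tr}+\delta\ln T_{int}$, and the arithmetic--geometric mean inequality (\ref{another use}) gives $3\ln T_{tr}\geq\ln\det\Theta$. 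Chaining these two bounds yields the claim.

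For the second inequality, the key observation is that $\mathcal{M}_{\Theta}$ coincides with $\mathcal{M}_{1,0}$, so by construction its covariance is exactly $\Theta$ and its internal temperature is exactly $T_{int}$; hence it shares with $f$ the moments against $1$, $(v-U)\otimes(v-U)$ and $I^{2/\delta}$. Since $\ln\mathcal{M}_{\Theta}$ is an affine combination of precisely these moment functions (a constant, the quadratic form $(v-U)^{\top}\Theta^{-1}(v-U)$, and $I^{2/\delta}/T_{int}$), the cross term collapses: $\int f\ln\mathcal{M}_{\Theta}\,dv\,dI=\int\mathcal{M}_{\Theta}\ln\mathcal{M}_{\Theta}\,dv\,dI=H(\mathcal{M}_{\Theta})$. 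Therefore
\[
H(f|\mathcal{M}_{\Theta}) = \int f\ln\frac{f}{\mathcal{M}_{\Theta}}\,dv\,dI = H(f) - H(\mathcal{M}_{\Theta}) \geq 0,
\]
where the nonnegativity of the relative entropy follows from Jensen's inequality (equivalently, from $x\ln x - x\ln y \geq x-y$). This gives $H(\mathcal{M}_{\Theta})\leq H(f)$.

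I expect the main obstacle to be the moment-matching bookkeeping in the second step: one must verify with care that $\mathcal{M}_{\Theta}$ reproduces the \emph{full} stress tensor $\Theta$ of $f$ (not merely a scalar temperature) together with its internal energy, so that $\int f\ln\mathcal{M}_{\Theta}$ genuinely reduces to $H(\mathcal{M}_{\Theta})$. Everything else is controlled by concavity of the logarithm and the inequality (\ref{another use}) already established.
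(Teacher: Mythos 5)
Your proof is correct, but note that the paper itself gives no proof of this lemma at all: it is quoted from \cite{ALPP} as a black box, so your argument is genuinely supplying what the paper delegates to a citation. Both halves of your proof check out. For $H(\mathcal{M}_{0,1})\leq H(\mathcal{M}_{\Theta})$, your subtraction of the formulas in (\ref{Hfunc}) is right (the $2\pi$'s do cancel since $\ln\det(2\pi\Theta)=3\ln 2\pi+\ln\det\Theta$), and the chain $(3+\delta)\ln T_{\delta}\geq 3\ln T_{tr}+\delta\ln T_{int}\geq \ln\det\Theta+\delta\ln T_{int}$, obtained from concavity of $\ln$ applied to the convex combination $T_{\delta}=\frac{3}{3+\delta}T_{tr}+\frac{\delta}{3+\delta}T_{int}$ followed by (\ref{another use}), closes the estimate; this is pleasingly consistent with the toolkit the paper already uses in Lemma 2.1. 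For $H(\mathcal{M}_{\Theta})\leq H(f)$, your Gibbs-type argument is the standard one (and essentially the one in \cite{ALPP}): since $\ln\mathcal{M}_{\Theta}$ is affine in the functions $1$, the entries of $(v-U)\otimes(v-U)$, and $I^{2/\delta}$, and since $\mathcal{M}_{\Theta}$ is built precisely to reproduce the moments of $f$ against these functions (mass $\rho$, full tensor $\rho\Theta$, internal energy $\rho\frac{\delta}{2}T_{int}$ --- the last requiring the small computation $\int_{\mathbb{R}_+} I^{2/\delta}e^{-I^{2/\delta}}dI = \frac{\delta}{2}\int_{\mathbb{R}_+}e^{-I^{2/\delta}}dI$, which is where $\Lambda_{\delta}$ earns its keep), the cross term collapses and $H(f)-H(\mathcal{M}_{\Theta})=H(f|\mathcal{M}_{\Theta})\geq 0$. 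The only caveat worth stating explicitly is that your argument, like the lemma itself, presupposes $\Theta$ positive definite and $T_{int}>0$ (so that $\mathcal{M}_{\Theta}$ is well defined and the eigenvalue logarithms make sense); the paper flags exactly these positivity assumptions as a priori hypotheses in the remarks after Corollary 1.1, so this is not a gap but should be acknowledged.
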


%
%
%
%
\subsection{Proof of Theorem \ref{main result} (1)}
We first recall $F^{\prime}(x)(x-y)\geq F(x)-F(y)$
satisfied by any convex function $F$, which, in view of the convexity of $x\ln x$ implies
\begin{align*}
D_{\nu,\theta}(f)&=-\int_{\mathbb{R}^3\times \mathbb{R}^+} A_{\nu,\theta}\big\{\mathcal{M}_{\nu,\theta}(f)-f\big\}\ln f dvdI\cr
&=A_{\nu,\theta}\int_{\mathbb{R}^3\times \mathbb{R}^+} \big\{f-\mathcal{M}_{\nu,\theta}(f)\big\}H^{\prime}(f)vdI\cr
&\geq A_{\nu,\theta}\big\{ H(f)-H(\mathcal{M}_{\nu,\theta})\big\}.
\end{align*}
We divide the last term as
\begin{eqnarray*}
H(f)-H\big(\mathcal{M}_{\nu,\theta}\big)=H(f)-H(\mathcal{M}_{0,1})+H(\mathcal{M}_{0,1})-H(\mathcal{M}_{\nu,\theta}).
\end{eqnarray*}
Then, by Lemmas 2.1 and 2.2, we obtain
\begin{align*}
H(f)-H(\mathcal{M}_{\nu,\theta})&\geq H(f|\mathcal{M}_{0,1})+(1-\theta)\{H(\mathcal{M}_{0,1})-H(\mathcal{M}_{\Theta})\} \cr
&\geq H(f|\mathcal{M}_{0,1})+(1-\theta)\{H(\mathcal{M}_{0,1})-H(f)\} \cr
&= H(f|\mathcal{M}_{0,1})+(\theta-1) H(f|\mathcal{M}_{0,1}) \cr
&= \theta H(f|\mathcal{M}_{0,1}).
\end{align*}
\subsection{The proof of Corollary 1.1 (1)}
From Theorem 1.1 (1), we get
\begin{eqnarray*}
\frac{d}{dt}H(f|\mathcal{M}_{0,1})=-D_{\nu,\theta}(f)
\leq-\theta A_{\nu,\theta}H(f|\mathcal{M}_{0,1}).
\end{eqnarray*}
Note that  $A_{\nu,\theta}$ is a constant since $\rho$ and $T_{\delta}$ is constant in the homogeneous case. Then Gronwall's lemma gives
\begin{eqnarray*}
H(f|\mathcal{M}_{0,1})\leq e^{-\theta A_{\nu,\theta}t}H(f_0|\mathcal{M}_{0,1}).
\end{eqnarray*}
Hence, the application of the Kullback inequality:
\begin{eqnarray*}
\|f-g\|_{L^1}\leq \sqrt{2H(f|g)}\quad \mbox{ if }\quad \int f=\int g
\end{eqnarray*}
gives the desired result.
%
%
%
%
\section{Entropy production estimate in the case: $\theta=0$ }
In this case, we see that $T_{\theta} = T_{int}$ and $ \mathcal{T}_{\nu,0}=(1-\nu)T_{tr}Id+\nu\Theta$
to get
\begin{eqnarray*}
\mathcal{M}_{\nu,0}(f)=\frac{\rho \Lambda_{\delta}}{\sqrt{\det (2\pi \mathcal{T}_{\nu,0})} ( T_{int})^{\frac{\delta}{2}}}\exp\left(-\frac{1}{2}(v-U)^{\top}\mathcal{T}_{\nu,0}^{-1}(v-U)-\frac{I^{\frac{2}{\delta}}}{ T_{int}}\right).
\end{eqnarray*}
\begin{lemma}The H-functional for $\mathcal{M}_{0,0}$, $\mathcal{M}_{\Theta}$ and $\mathcal{M}_{\nu,0}$ satisfies
\begin{eqnarray*}
H(\mathcal{M}_{0,0})-H(\mathcal{M}_{\nu,0}) \geq
\max\{\nu,-2\nu\}\{H(\mathcal{M}_{0,0})-H(\mathcal{M}_{\Theta})\}.
\end{eqnarray*}
\end{lemma}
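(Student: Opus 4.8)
The plan is to mirror the structure of the proof of Lemma 2.1, while exploiting the crucial simplification available at $\theta=0$: since here $T_{\theta}=T_{int}$, all three Gaussians $\mathcal{M}_{0,0}$, $\mathcal{M}_{\Theta}$ and $\mathcal{M}_{\nu,0}$ carry \emph{exactly} the same internal-energy factor $\exp(-I^{2/\delta}/T_{int})$ together with the same normalizing power $(T_{int})^{\delta/2}$. Consequently, when I subtract $H$-functionals the entire internal contribution $-\tfrac{\delta}{2}\rho\ln T_{int}$ cancels and I am left with a purely translational comparison. First I would record, from the formula \eqref{Hfunc} specialized to $\theta=0$, that
\[
H(\mathcal{M}_{0,0})-H(\mathcal{M}_{\nu,0})=\frac{\rho}{2}\big\{\ln\det\mathcal{T}_{\nu,0}-3\ln T_{tr}\big\},\qquad
H(\mathcal{M}_{0,0})-H(\mathcal{M}_{\Theta})=\frac{\rho}{2}\big\{\ln\det\Theta-3\ln T_{tr}\big\}.
\]
Thus the lemma reduces to the single scalar inequality $\ln\det\mathcal{T}_{\nu,0}-3\ln T_{tr}\geq \max\{\nu,-2\nu\}\big(\ln\det\Theta-3\ln T_{tr}\big)$.

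Next I would diagonalize $\Theta$ by an orthogonal matrix $P$, with eigenvalues $\Theta_1,\Theta_2,\Theta_3$ satisfying $\Theta_1+\Theta_2+\Theta_3=3T_{tr}$, so that $\mathcal{T}_{\nu,0}$ has eigenvalues $(1-\nu)T_{tr}+\nu\Theta_i$ and $\ln\det\mathcal{T}_{\nu,0}=\sum_{i}\ln\{(1-\nu)T_{tr}+\nu\Theta_i\}$. Then I split on the sign of $\nu$, exactly as in Lemma 2.1. For $0\leq\nu<1$ each eigenvalue is a convex combination of $T_{tr}$ and $\Theta_i$, so concavity of $\ln$ gives $\ln\det\mathcal{T}_{\nu,0}\geq 3(1-\nu)\ln T_{tr}+\nu\ln\det\Theta$, whence
\[
\ln\det\mathcal{T}_{\nu,0}-3\ln T_{tr}\geq \nu\big(\ln\det\Theta-3\ln T_{tr}\big),
\]
which is the claim since $\max\{\nu,-2\nu\}=\nu$ in this range. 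For $-1/2<\nu\leq0$ the combination is no longer convex, so I rewrite, using $\sum_{j}\Theta_j=3T_{tr}$, the $i$-th eigenvalue as $(1+2\nu)T_{tr}-\nu\sum_{j\neq i}\Theta_j$; now the weights $1+2\nu,-\nu,-\nu$ are nonnegative and sum to one, so concavity of $\ln$ applies again and, since each $\ln\Theta_j$ occurs twice in the resulting double sum, yields $\ln\det\mathcal{T}_{\nu,0}\geq 3(1+2\nu)\ln T_{tr}-2\nu\ln\det\Theta$. This gives $\ln\det\mathcal{T}_{\nu,0}-3\ln T_{tr}\geq -2\nu(\ln\det\Theta-3\ln T_{tr})$, matching $\max\{\nu,-2\nu\}=-2\nu$.

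The one point needing care—the ``obstacle'', such as it is—is the direction of the inequality: the arithmetic–geometric bound \eqref{another use} shows $\ln\det\Theta-3\ln T_{tr}\leq0$, while the multiplier $\max\{\nu,-2\nu\}$ is nonnegative, so I must arrange the concavity estimates so that multiplying this negative quantity by a nonnegative coefficient preserves the direction of $\geq$. Apart from this sign bookkeeping the argument is in fact shorter than that of Lemma 2.1, precisely because the internal-temperature discrepancy step used there, namely the bound $\ln T_{\theta}\geq(1-\theta)\ln T_{int}+\theta\ln T_{\delta}$ of \eqref{to see}, is simply absent when $\theta=0$; this is the analytic reflection of the dichotomy noted in the introduction, whereby the internal energy decouples and the target equilibrium bifurcates to $\mathcal{M}_{0,0}$.
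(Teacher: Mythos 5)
Your proof is correct and takes essentially the same route as the paper: the exact cancellation of the $T_{int}$ terms reduces the lemma to the monatomic determinant inequality $\ln\det\mathcal{T}_{\nu,0}-3\ln T_{tr}\geq \max\{\nu,-2\nu\}\,(\ln\det\Theta-3\ln T_{tr})$, which is then settled by the same two-case concavity argument (convex combination for $0\leq\nu<1$, the rewriting $(1-\nu)T_{tr}+\nu\Theta_i=(1+2\nu)T_{tr}-\nu\sum_{j\neq i}\Theta_j$ for $-1/2<\nu\leq 0$). The only difference is presentational: the paper outsources this step to Lemma 2.2 of \cite{Yun}, whereas you carry it out explicitly, exactly as in the paper's own Lemma 2.1 specialized to $\theta=0$.
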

\begin{proof}
An explicit computation, which is almost identical with the one given in \cite{Yun} gives
\begin{eqnarray*}
&&H(\mathcal{M}_{0,0})-H(\mathcal{M}_{\nu,0})\cr
&&\qquad=\left\{
    \begin{array}{ll}
      (1/2)\rho\big\{3(1-\nu)\ln T_{tr}+\nu\ln\det\Theta -3\ln T_{tr}\big\} & \hbox{for }~0\leq \nu <1, \\
      (1/2)\rho\big\{3(1+2\nu)\ln T_{tr}-2\nu\ln\det\Theta -3\ln T_{tr}\big\} & \hbox{for}-1/2< \nu \leq0.
    \end{array}
  \right.
\end{eqnarray*}
Note that $T_{int}$ is cancelled out in this case, which reduces the remaining computation to that of the  monatomic case
carried out in \cite{Yun}. Therefore, by Lemma 2.2 in \cite{Yun}, we get the desired result.
\end{proof}
\subsection{Proof of Theorem 1.1 (2)}:
As in the previous case, we have from the convexity of $x\ln x$:
\begin{eqnarray*}
D_{\nu,0}(f)\geq A_{\nu,0}\left\{H(f)-H(\mathcal{M}_{\nu,0})\right\}.
\end{eqnarray*}
We split the last term as
\begin{eqnarray*}
H(f)-H(\mathcal{M}_{\nu,0})=H(f)-H(\mathcal{M}_{0,0})+H(\mathcal{M}_{0,0})-H(\mathcal{M}_{\nu,0})
\end{eqnarray*}
and apply Lemma 3.1 to get the desired result:
\begin{align*}
H(f)-H(\mathcal{M}_{\nu,0})&\geq H(f|\mathcal{M}_{0,0})+\max\{\nu,-2\nu\}\{H(\mathcal{M}_{0,0})-H(\mathcal{M}_{\Theta})\} \cr
&\geq H(f|\mathcal{M}_{0,0})+\max\{\nu,-2\nu\}\{H(\mathcal{M}_{0,0})-H(f)\} \cr
&= H(f|\mathcal{M}_{0,0})-\max\{\nu,-2\nu\} H(f|\mathcal{M}_{0,0}) \cr
&= \min\big\{1-\nu,1+2\nu\big\} H(f|\mathcal{M}_{0,0}),
\end{align*}
where we used $H(\mathcal{M}_{\Theta}) \leq  H(f)$.\newline
The proof for the Corollary 1.1 (2) is identical to the previous case. we omit it.
\newline

\noindent{\bf Acknowledgement} This research was supported by Basic Science Research Program through
the National Research Foundation of Korea (NRF) funded by the Ministry of Science, ICT $\&$
Future Planning (NRF-2014R1A1A1006432)


%
%
\bibliographystyle{amsplain}

\begin{thebibliography}{10}
\bibitem{ALPP} Andries, P., Le Tallec, P., Perlat, J.-P., Perthame, B.: The Gaussian-BGK model of Boltzmann equation
with small Prandtl number. Eur. J. Mech. B Fluids {\bf 19} (2000), no. 6, 813--830.
\bibitem{Bisi} Bisi, M., Ca$\tilde{\mbox{n}}$izo, J. A., Lods, B.: Entropy dissipation estimates for the linear Boltzmann operator. J. Funct. Anal. {\bf269} (2015), no. 4, 1028-–1069.
\bibitem{BGK} Bhatnagar, P. L., Gross, E. P. and Krook, M.: A model for collision processes in gases. Small amplitude process in charged and neutral one-component systems, Physical Revies, {\bf 94} (1954), 511-525.
\bibitem{Brull2} Brull, S., Schneider, J. : A new approach for the Ellipsoidal Statistical Model. Cont. Mech. Thermodyn. 20 (2008), no.2, 63-74,
\bibitem{Brull3} Brull, S., Schneider, J. : On the ellipsoidal statistical model for polyatomic gases.
Contin. Mech. Thermodyn. {\bf20} (2009), no. 8, 489–-508.
\bibitem{Brun} Brun R.: Transport et Relaxation dans les $\acute{\mbox{e}}$coulements Gazeux, Masson, 1986.
\bibitem{Cai}  Cai, Z., Li, R. : The NRxx method for polyatomic gases. J. Comput. Phys. {\bf267} (2014), 63–91.
\bibitem{Cercignani} Cercignani, C. : $H$-theorem and trend to equilibrium in the kinetic theory of gases.
Arch. Mech. (Arch. Mech. Stos.) {\bf34} (1982), no. 3, 231–-241 (1983).
\bibitem{DV} Desvillettes, L., Villani, C. : On the spatially homogeneous Landau equation for hard potentials. I. Existence, uniqueness and smoothness. Comm. Partial Differential Equations {\bf25} (2000), no. 1-2, 179–-259 $\&$ 261--298.
\bibitem{Holway} Holway, L.H.: Kinetic theory of shock structure using and ellipsoidal distribution function. Rarefied Gas Dynamics, Vol. I(Proc. Fourth Internat. Sympos., Univ. Toronto, 1964), Academic Press, New York, (1966), pp. 193--215.
\bibitem{KP} Khobalatte B., Perthame B.: Maximum principle on the entropy and minimal limitations for kinetic schemes, Math. Comp. {\bf 62} (205) (1994)
119–-135.
\bibitem{PL} Pitaevski L.P., Lifschitz E.M.: Physical Kinetics, Pergamon Press, Oxford, 1981.
\bibitem{Shen} Shen, C : Rarefied Gas Dynamics: Fundamentals, Simulations and Micro Flows, Springer, 2005
\bibitem{V}  Villani, C.: Cercignani's conjecture is sometimes true and always almost true. Comm. Math. Phys. {\bf234} (2003), no. 3, 455-–490.
\bibitem{Wu} Wu, K.-C. : Global in time estimates for the spatially homogeneous Landau equation with soft potentials.
J. Funct. Anal. {\bf266} (2014), no. 5, 3134–-3155.
\bibitem{Yun2} Yun, S.-B.: Classical solutions for the ellipsoidal BGK model with fixed collision frequency. J. Differential Equations
    {\bf259} (2015), no. 11, 6009–-6037.
\bibitem{Yun3} Yun, S.-B.: Ellipsoidal BGK model near a global Maxwellian. SIAM J. Math. Anal. {\bf47} (2015), no. 3, 2324–-2354.
\bibitem{Yun} Yun. S.-B. : Entropy production for the ellipsoidal BGK model of the Boltzmann equation. submitted.
\end{thebibliography}

\end{document}